\documentclass[11 pt]{amsart}
\usepackage{latexsym,amscd,amssymb, graphicx, amsthm,hyperref}
\usepackage{tikz}

\usepackage[margin=1in]{geometry}

\numberwithin{equation}{section}

\newtheorem{theorem}{Theorem}[section]

\newtheorem{corollary}[theorem]{Corollary}
\newtheorem{lemma}[theorem]{Lemma}

\newtheorem{problem}[theorem]{Problem}

\newtheorem{definition}[theorem]{Definition}
\theoremstyle{definition}

\newcommand{\Hilb}{\mathrm {Hilb}}
\newcommand{\symm}{\mathfrak{S}}
\newcommand{\grFrob}{\mathrm {grFrob}}
\newcommand{\Frob}{\mathrm {Frob}}

\newcommand{\CC}{\mathbb {C}}
\newcommand{\QQ}{\mathbb {Q}}
\newcommand{\ZZ}{\mathbb {Z}}

\newcommand{\OP}{\mathcal{OP}}

\newcommand{\WWW}{\mathcal{W}}

\newcommand{\xx}{\mathbf {x}}


\begin{document}

\title[Packed words and quotient rings]
{Packed words and quotient rings}

\author{Dani\"el Kroes and Brendon Rhoades}
\address
{Department of Mathematics \newline \indent
University of California, San Diego \newline \indent
La Jolla, CA, 92093-0112, USA}
\email{dkroes@ucsd.edu, bprhoades@ucsd.edu}

\begin{abstract}
The coinvariant algebra is a quotient of the polynomial ring $\QQ[x_1,\ldots,x_n]$ whose algebraic properties are governed by the combinatorics of permutations of 
length $n$. A word $w = w_1 \dots w_n$ over the positive integers is {\em packed} if whenever $i > 2$ appears as a letter of $w$, so does $i-1$. We introduce 
a quotient $S_n$ of $\QQ[x_1,\ldots,x_n]$ which is governed by the combinatorics of packed words. 
We relate our quotient $S_n$ to the generalized coinvariant rings of Haglund, Rhoades, and Shimozono as well as the 
superspace coinvariant ring.
\end{abstract}

\maketitle

\section{Introduction}
Consider the polynomial ring $\QQ[\xx_n] := \QQ[x_1,\ldots,x_n]$ in $n$ variables. The symmetric group $\symm_n$ acts on $\QQ[\xx_n]$ by variable permutation. It is known that the corresponding invariant subring $\QQ[\xx_n]^{\symm_n}$ of \emph{symmetric functions} has algebraically independent homogeneous generators $e_1(\xx_n),\ldots,e_n(\xx_n)$, where
\[
e_d := \sum_{1 \leq i_1 < \ldots < i_d \leq n} x_{i_1} \cdots x_{i_d}
\]
is the degree $d$ \emph{elementary symmetric polynomial}. The \emph{invariant ideal} is the ideal generated by the symmetric functions with constant term:
\[
I_n := \langle \QQ[\xx_n]_+^{\symm_n} \rangle = \langle e_1,\ldots,e_n \rangle.
\]
The \emph{coinvariant algebra} $R_n := \QQ[x_n]/I_n$ has long been studied. 
In particular, as ungraded $\symm_n$-module $R_n \cong_{\symm_n} \QQ[\symm_n]$ coincides with the regular representation of $\symm_n$. 
Moreover, the Hilbert series $\Hilb(R_n;q) = (1+q)(1+q+q^2)\cdots(1+q+\ldots+q^{n-1})$ coincides with the 
generating function of both the inversion and major index statistic on permutations.

Traditionally studied in physics, the superspace ring $\Omega_n$ has received significant recent attention in coinvariant theory
\cite{RW-module, Zabrocki}.
For a positive integer $n$, {\em superspace} of rank $n$ is the tensor product
\[
\Omega_n := \QQ[x_1, \dots, x_n] \otimes \wedge \{ \theta_1, \dots , \theta_n \}
\]
of a rank $n$ polynomial ring with a rank $n$ exterior algebra.
The group $\symm_n$ acts diagonally on $\Omega_n$, viz. $w.x_i := x_{w(i)}$,
$w.\theta_i := \theta_{w(i)}$. Let $(\Omega_n)^{\symm_n}_+ \subseteq \Omega_n$ denote the space of $\symm_n$-invariants with vanishing constant term and 
let $\langle (\Omega_n)^{\symm_n}_+ \rangle \subseteq \Omega_n$ be the ideal generated by this subspace.
Considering commuting and anticommuting variables separately, the {\em superspace coinvariant ring}
$\Omega_n/\langle (\Omega_n)^{\symm_n}_+ \rangle$ carries a bigraded action of $\symm_n$.
One recovers the classical coinvariant algebra by setting the $\theta$-variables to zero.

The
Combinatorics Group
and the Fields Institute conjectured (see \cite{Zabrocki}) a formula for the bigraded $\symm_n$-Frobenius image type of the superspace coinvariant ring. 
Let $\mathrm{SYT}(n)$ be the family of standard Young tableaux with $n$ boxes. Given $T \in \mathrm{SYT}(n)$, let $\mathrm{des}(T)$ be the number 
of descents in $T$ and let $\mathrm{maj}(T)$ be its major index. We use the $q$-analog notation
\begin{equation*}
    [n]_q := 1 + q + \cdots + q^{n-1}, \quad 
    [n]!_q := [n]_q [n-1]_q \cdots [1]_q, \quad
    {n \brack k}_q := \frac{[n]!_q}{[k]!_q \cdot [n-k]!_q}.
\end{equation*}
The Fields Group conjectured \cite{Zabrocki} that 
\begin{equation}
    \label{superspace-frobenius}
     \grFrob( \Omega_n/\langle (\Omega_n)^{\symm_n}_+ \rangle; q, z) = \sum_{k = 1}^n z^{n-k} \cdot 
     C_{n,k}(\xx;q)
\end{equation}
where
\begin{equation}
\label{c-function-definition}
    C_{n,k}(\xx; q) := 
    \sum_{T \in \mathrm{SYT}(n)} q^{\mathrm{maj}(T) + {n-k \choose 2} + (n-k) \cdot \mathrm{des}(T)} {\mathrm{des}(T) \brack n-k}_q s_{\mathrm{shape}(T)'}(\xx).
\end{equation}
Here $s_{\mathrm{shape}(T)'}(\xx)$ is the Schur function corresponding to the conjugate of the shape of $T$.

The symmetric function $C_{n,k}(\xx;q)$ appearing in Equation~\eqref{superspace-frobenius} has appeared in the literature before.
Combinatorially, it is the $t = 0$ specialization of the function $\Delta'_{e_{k-1}} e_n$ appearing in the Haglund-Remmel-Wilson {\em Delta Conjecture}
\cite{HRW, DM}.
Algebraically, it is (up to a minor twist) the graded Frobenius image of the generalized coinvariant algebras $R_{n,k}$ introduced by 
Haglund-Rhoades-Shimozono \cite{HRS}. 
Geometrically, it is (up to the same minor twist) the graded Frobenius image of the cohomology representation afforded by the $\symm_n$-action on the 
variety $X_{n,k}$ of $n$-tuples
of lines $(\ell_1, \dots, \ell_n)$ spanning $\mathbb{C}^k$ \cite{PR}.
Despite these varied interpretations of $C_{n,k}(\xx;q)$, the formula \eqref{superspace-frobenius} remains conjectural as of this writing.

Whereas algebraic properties of the classical coinvariant ring are governed by permutations, the superspace coinvariants appear to be 
governed by packed words. A word $w = w_1 \dots w_n$ over the positive integers is {\em packed} if, for all $i > 0$,
whenever $i+1$ appears as a letter in $w$, so does $i$. Let $\WWW_n$ be the family of packed words of length $n$. For example, we have
\begin{equation*}
    \WWW_3 = \{123, 213, 132, 231, 312, 321, 112, 121, 211, 122, 212, 221, 111\}.
\end{equation*}
Packed words in $\WWW_n$ are in natural bijection with the family $\OP_n$ of all ordered set partitions of $[n]$ and have
appeared in various settings including Hopf algebras \cite{NT} and polytopes \cite{CL}.

The symmetric group $\symm_n$ acts on the set $\WWW_n$ by letter permutation. The conjecture \eqref{superspace-frobenius} implies that
\begin{equation}
    \label{ungraded-superspace}
    \Omega_n/\langle (\Omega_n)^{\symm_n}_+ \rangle \cong \QQ[\WWW_n] \otimes \mathrm{sign}
\end{equation}
as ungraded $\symm_n$-modules, where $\mathrm{sign}$ is the 1-dimensional sign representation of $\symm_n$. 
Proving the isomorphism \eqref{ungraded-superspace} remains an open problem. 
Even the dimension equality $\dim \Omega_n/\langle (\Omega_n)^{\symm_n}_+ \rangle = |\WWW_n|$ is presently out of reach.

Motivated by the conjecture \eqref{superspace-frobenius}, we
define the following family of singly-graded $\symm_n$-modules $S_n$ 
which provably have vector space dimension $|\WWW_n|$ and satisfy algebraic properties
 similar to \eqref{ungraded-superspace} and \eqref{superspace-frobenius}.
 We let $e_d^{(i)} := e_d(x_1, \dots, x_{i-1}, x_{i+1}, \dots, x_n)$ be the degree $d$ elementary symmetric polynomial
 with the variable $x_i$ omitted.

\begin{definition} \label{sn-quotient-definition}
Let $J_n \subseteq \QQ[\xx_n]$ be the ideal 
\[
J_n = \langle x_i^d \cdot e_{n-r}^{(i)} \ : \ 1 \leq i \leq n, \, 1 \leq r \leq d \rangle
\]
and let $S_n := \QQ[\xx_n]/J_n$ be the corresponding quotient ring.
\end{definition}

By convention, the degree 0 elementary symmetric polynomial is 1, so that $J_n$ contains the variable powers $x_i^n$. Additionally, we use the convention that $e_d \equiv 0$ for $d < 0$.

Although the generators of the ideal $J_n$ may appear unusual, they will arise naturally from the perspective of orbit harmonics as follows. More precisely, suppose $X \subseteq \QQ^n$ is a finite locus of points. Consider the ideal
\begin{equation}
    \mathbf{I}(X) := \{ f \in \QQ[\xx_n] \,:\, f(\mathbf{x}) = 0 \text{ for all $\mathbf{x} \in X$} \}
\end{equation}
of polynomials in $\QQ[\xx_n]$ which vanish on $X$ and let 
\begin{equation}
    \mathbf{T}(X) := \langle \tau(f) \,:\, f \in \mathbf{I}(X) - \{0\} \rangle,
\end{equation}
where $\tau(f)$ denotes the highest degree component of a nonzero polynomial $f \in \QQ[\xx_n]$.
The homogeneous ideal $\mathbf{T}(X)$ is the {\em associated graded} ideal of $\mathbf{I}(X)$ and we have isomorphisms of $\QQ$-vector spaces
\begin{equation}
    \QQ[X] \cong \QQ[\xx_n]/\mathbf{I}(X) \cong \QQ[\xx_n]/\mathbf{T}(X)
\end{equation}
which are isomorphisms of ungraded $\symm_n$-modules when $X$ is closed under the natural action of $\symm_n$ on $\QQ^n$;
the quotient $\QQ[\xx_n]/\mathbf{T}(X)$ has the additional structure of a graded $\symm_n$-module.

Given $n$ distinct rational parameters $\alpha_1, \dots, \alpha_n$, we have a natural point locus $X_n \subseteq \QQ^n$ in bijection
with $\WWW_n$, namely
\begin{equation}
    X_n = \{ (\beta_1, \dots, \beta_n) \in \QQ^n \,:\, \{ \beta_1, \dots, \beta_n \} = \{\alpha_1, \dots, \alpha_k\} \text{ for some $k$} \}.
\end{equation}
It will develop that
\begin{equation}
\label{harmonic-identification}
   \mathbf{T}(X_n) = J_n.
\end{equation}
In other words, the quotient $S_n = \QQ[\xx_n]/\mathbf{T}(X_n)$ is the graded quotient of $\QQ[\xx_n]$ arising from the packed word 
locus $X_n$.
Equation~\eqref{harmonic-identification} may be viewed as a more natural, if less computationally useful,
alternative to Definition~\ref{sn-quotient-definition}.
We prove the following facts regarding the module $S_n$.
\begin{itemize}
\item
The ungraded $\symm_n$-structure of $S_n$ coincides with the natural $\symm_n$-action on $\WWW_n$ (without sign twist) 
\begin{equation}
    \label{ungraded-sn-structure}
    S_n \cong \QQ[\WWW_n].
\end{equation}
\item
The graded $\symm_n$-structure 
is described by
\begin{equation}
    \label{sn-graded-frobenius-image}
     \grFrob(S_n; q) = \sum_{k = 1}^n q^{n-k} \cdot (\mathrm{rev}_q \circ \omega) C_{n,k}(\xx;q).
\end{equation}
Here $\mathrm{rev}_q$ is the operator on polynomials in $q$ which reverses their coefficient sequences and $\omega$ is the  symmetric function
involution which trades $e_n(\xx)$ for $h_n(\xx)$.
\end{itemize}
Finding an algebraic explanation for 
the similarity between the provable \eqref{sn-graded-frobenius-image} and the conjectural \eqref{superspace-frobenius} could shed light on a proof
of \eqref{superspace-frobenius}.

The outline of the paper is as follows. In Section \ref{sec-background} we cover some of the necessary background, including symmetric functions, representation theory of the symmetric group, and Gr\"obner theory. In Section \ref{sec-combinatorics} we describe a bijection between ordered set partitions in $\OP_n$
and certain sequences $(c_1, \dots, c_n)$ of nonnegative integers. This bijection 
will translate to a bijection between a monomial basis of $S_{n}$ and $\WWW_{n}$. In Section \ref{sec-algebra} we use this bijection to prove
\eqref{ungraded-sn-structure} and its graded refinement  
Equation~\eqref{sn-graded-frobenius-image}. In Section \ref{sec-conclusion} we end with some concluding remarks and open questions.

\section{Background}\label{sec-background}

\subsection{Symmetric functions and the representation theory of $\symm_n$} 
A \emph{partition} $\lambda$ of size $n$, denoted $\lambda \vdash n$, is a sequence $\lambda = (\lambda_1,\ldots,\lambda_m)$ of integers $\lambda_1 \geq \ldots \geq \lambda_m > 0$ with $\lambda_1+\ldots+\lambda_m=n$.

Let $\xx = (x_1,x_2,x_3,\ldots)$ be an infinite set of variables and let $\Lambda \subseteq \QQ[[\xx]]$ be the \emph{ring of symmetric functions}. It is known that the degree $n$ homogeneous piece of $\Lambda$ has a basis given by the \emph{Schur functions} $s_{\lambda}(\xx)$ 
where the index $\lambda$ ranges over all partitions of size $n$. 
For thorough definitions of the ring of symmetric functions and Schur functions, we refer to \cite{Sagan}.

We will now recall the fundamentals of the representation theory of $\symm_n$. The irreducible representations of $\symm_n$ are naturally in bijections with the partitions $\lambda \vdash n$. For every such $\lambda$, the corresponding irreducible $\symm_n$-module is denoted by $S^{\lambda}$. 

Consequently, every $\symm_n$-module $V$ decomposes as
\[
V = \bigoplus_{\lambda \vdash n} \left(S^{\lambda}\right)^{c_{\lambda}}
\]
for some integers $c_{\lambda} \geq 0$. The \emph{Frobenius character} of $V$ is the symmetric function
\[
\Frob(V) = \sum_{\lambda \vdash n} c_{\lambda} \cdot s_{\lambda}(\xx).
\]

Lastly, let $V$ be a graded vector space such that for every $d \geq 0$ the degree $d$ homogeneous component $V_d$ is finite dimensional. The \emph{Hilbert series} of $V$ is the power series in $q$ given by
\[
\Hilb(V;q) = \sum_{d \geq 0} \dim(V_d) \cdot q^d.
\]
If further $V$ carries a graded $\symm_n$-action, we define the \emph{graded Frobenius character} by
\[
\grFrob(V;q) = \sum_{d \geq 0} \Frob(V_d) \cdot q^d.
\]

More details on the representation theory of $\symm_n$ can be found in \cite{Sagan}.

\subsection{Gr\"obner theory}
In this section we will review some of the Gr\"obner theory used in this paper. The main starting point of Gr\"obner theory is a polynomial ring $k[\xx_n]$ over a field $k$ equipped with a total order $<$ on its monomials that satisfies:
\begin{itemize}
    \item[1.] $1 \leq m$ for any monomial $m$;
    \item[2.] for monomials $m_1 < m_2$ and any monomial $m$ we have $m \cdot m_1 < m \cdot m_2$. 
\end{itemize}
Such a total order is called a \emph{monomial order}. Given a monomial order $<$, for any $0 \neq f \in k[\xx_n]$ we define the \emph{leading monomial} $\mathrm{LM}(f)$ as the monomial $m$ such that $m$ has nonzero coefficient in $f$ and $n \leq m$ for any monomial $n$ with nonzero coefficient in $f$. For an ideal $I \leq k[\xx_n]$ we set $\mathrm{LM}(I)$ as the ideal generated by the leading monomials of all non-zero $f \in I$.

We know \cite{CLOS} that a basis for the $k$-vector space $k[\xx_n]/I$ is given by all monomials $m$ that do not belong to $\mathrm{LM}(I)$, which is equivalent to $m$ not being divisible by any monomial of the form $LM(f)$ with $f \in I$. This basis for $k[\xx_n]/I$ is called the \emph{standard monomial basis} with respect to the total order $<$.

The monomial order on $\QQ[\xx_n]$ used in this paper is the \emph{lexicographic order}. In this order, two monomials $m_1 = x_1^{a_1} \cdots x_n^{a_n}$ and $m_2 = x_1^{b_1} \cdots x_n^{b_n}$ are compared as follows. Assuming $m_1 \neq m_2$, let $j \in \{1,2,\ldots,n\}$ be minimal such that $a_j \neq b_j$, then $m_1 < m_2$ if and only if $a_j < b_j$.

\section{The combinatorial bijection} \label{sec-combinatorics}
In this section we will establish a bijection between ordered set partitions and coinversion codes. The starting point will be a bijection established by Rhoades and Wilson \cite[Thm. 2.2]{RW}. 
Let $\OP_{n,k}$ be the family of ordered set partitions $(B_1 \mid \cdots \mid B_k)$ of $[n]$ into $k$ blocks.
Given an ordered set partition $\sigma = (B_1 \ | \ \cdots \ | \ B_k) \in \OP_{n,k}$, 
define a sequence $\mathbf{code}(\sigma) = (c_1,\ldots,c_n)$ as follows. If $1 \leq i \leq n$ and $i \in B_j$, then
\[
c_i = 
\begin{cases}
|\{\ell > j \ : \ \min(B_\ell) > i\}| & \text{if } i = \min(B_j); \\
|\{\ell > j \ : \ \min(B_\ell) > i\}| + (j-1) & \text{if } i \neq \min(B_j).
\end{cases}
\]
The sequence $\mathbf{code}(\sigma)$ was called the {\em coinversion code} of $\sigma$ in \cite{RW}.
This is a variant of the classical {\em Lehmer code} on permutations in the case $k = n$.

The coinversion $\mathbf{code}(\sigma)$ of ordered set partitions $\sigma \in \OP_{n,k}$ were characterized in \cite{RW} as follows.
Given a subset $S = \{s_1 < \cdots < s_d \} \subseteq [n]$, define the {\em skip sequence} by $\gamma(S) = (\gamma_1, \dots, \gamma_n)$ where
\begin{equation}
    \gamma_i = \begin{cases}
    i -j + 1 & \text{if $i = s_j \in S$} \\
    0 & \text{if $i \notin S$.}
    \end{cases}
\end{equation}
Also let $\gamma(S)^* = (\gamma_n, \dots, \gamma_1)$ be the {\em reverse skip sequence}. For example, if $n = 7$ and $S = \{2,3,6\}$
we have $\gamma(S) = (0,2,2,0,0,4,0)$ and $\gamma(S)^* = (0,4,0,0,2,2,0).$

\begin{theorem} (\cite[Thm. 2.2]{RW})
\label{RW-bijection}
Let $1 \leq k \leq n$. The map $\sigma \mapsto \mathbf{code}(\sigma)$ is a bijection from ordered set partitions of $[n]$
with $k$ blocks to the family of nonnegative integer sequences $(c_1,\ldots,c_n)$ such that 
\begin{itemize}
    \item for all $1 \leq i \leq n$ we have $c_i < k$,
    \item for any subset $S \subseteq [n]$ with $|S| = n-k+1$, the componentwise inequality $\gamma(S)^* \leq (c_1,\ldots,c_n)$ fails to hold.
\end{itemize}
\end{theorem}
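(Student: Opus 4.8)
The plan is to prove Theorem~\ref{RW-bijection} by building an explicit two-sided inverse $\Phi$ to the map $\sigma \mapsto \mathbf{code}(\sigma)$. I would first dispatch the easy half of well-definedness: for $\sigma = (B_1 \mid \cdots \mid B_k) \in \OP_{n,k}$ with $i \in B_j$, the set $\{\ell > j : \min(B_\ell) > i\}$ has at most $k - j$ elements, so $c_i \le k-j$ if $i = \min(B_j)$ and $c_i \le (k-j)+(j-1) = k-1$ otherwise; in every case $c_i < k$. The heart of the matter is a structural reading of $\mathbf{code}(\sigma)$. Writing $M = M(\sigma) = \{\min(B_1),\dots,\min(B_k)\}$ for the set of block minima and $\mu_1,\dots,\mu_k$ for the left-to-right sequence of block minima, one checks directly from the definition that: if $i \in M$ then $c_i$ counts the blocks lying to the right of $i$'s block whose minimum exceeds $i$ (so $c_i$ is a coinversion count on $\mu_1,\dots,\mu_k$, and in particular $c_i \le k - |M \cap [i]|$); while if $i \notin M$ then $c_i = (k - |M \cap [i-1]|) + (a_i - 1)$, where $a_i \in \{1,\dots,|M \cap [i-1]|\}$ records that $i$'s block is the $a_i$-th block, from the left, among those with minimum $< i$. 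The decisive consequence is a dichotomy: writing $m$ for the number of block minima less than $i$, one has $i \in M$ if and only if $c_i < k - m$.

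This dichotomy drives the reconstruction. Given a sequence $c = (c_1,\dots,c_n)$, I would process $i = 1,2,\dots,n$ while maintaining a running count $m$ (initially $0$): at step $i$, declare $i$ to be a block minimum and increase $m$ by $1$ exactly when $c_i < k - m$, and otherwise leave $m$ fixed (a \emph{stationary} step); note that $m$ never exceeds $k$. If this first phase yields a set $M$ of size $k$, then the dichotomy's bound makes $(c_\mu)_{\mu \in M}$ a valid Lehmer-type code, so it determines a unique left-to-right ordering of $k$ singleton blocks carrying those minima; finally each $i \notin M$ is placed into the $a_i$-th block from the left with minimum $< i$, where $a_i := c_i - (k - |M \cap [i-1]|) + 1$. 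Since $a_i \ge 1$ holds exactly at the stationary steps, $a_i \le |M \cap [i-1]|$ is equivalent to $c_i \le k-1$, and the last phase only ever inserts elements larger than the current block minimum, the map $\Phi$ is well-defined on $\{c : c_i < k \text{ for all } i, \ |M| = k\}$ with values in $\OP_{n,k}$. Checking $\Phi \circ \mathbf{code} = \mathrm{id}$ and $\mathbf{code} \circ \Phi = \mathrm{id}$ is then routine bookkeeping: an induction on $i$ using the dichotomy shows the first phase of $\Phi$ recovers $M(\sigma)$, the Lehmer reconstruction recovers the order of the blocks, the formula for $a_i$ recovers the position of each non-minimum, and the opposite composition is the same computation read in reverse.

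The real obstacle is to identify the domain of $\Phi$ with the target set of the theorem, that is, to show that for a sequence with all $c_i < k$ the skip-sequence condition is equivalent to the first phase ending with $|M| = k$. The first step will be a reindexing: $\gamma(S)^* \le (c_1,\dots,c_n)$ is equivalent to $c_{n+1-s_j} \ge s_j - j + 1$ for all $j$, and the substitution $t_j := n+1-s_{d+1-j}$ (with $d = n-k+1$) rewrites this as the existence of indices $t_1 < \cdots < t_d$ in $[n]$ with $c_{t_j} \ge k - t_j + j$ for all $j$; call such a tuple \emph{bad}, so the skip condition says that no bad tuple exists. Since $|M| < k$ exactly when there are at least $d$ stationary steps, it is enough to prove: there are $\ge d$ stationary steps if and only if a bad tuple exists. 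For the forward direction, if $u_1 < \cdots < u_d$ are the $d$ smallest stationary steps, then the running count just before step $u_i$ is exactly $u_i - i$ (its only earlier stationary steps being $u_1,\dots,u_{i-1}$), so $c_{u_i} \ge k - (u_i - i) = k - u_i + i$ and $\{u_1,\dots,u_d\}$ is bad. For the converse, given a bad tuple $T$, pick $t_j \in T$ with $j$ as large as possible subject to step $t_j$ not being stationary (if every $t_j$ is stationary there is nothing to prove); then step $t_j$ being an increment forces the running count before it to be less than $k - c_{t_j} \le t_j - j$, so at least $j$ of the first $t_j - 1$ steps are stationary, and these together with the stationary steps $t_{j+1},\dots,t_d$ (all exceeding $t_j$) give at least $d$ stationary steps. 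Combined with the structural analysis above, this equivalence shows that $\mathbf{code}$ carries $\OP_{n,k}$ bijectively onto the stated set.
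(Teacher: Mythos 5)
Your proof is correct, and it is essentially the same bijection the paper relies on. The paper does not reprove Theorem~\ref{RW-bijection} (it cites \cite{RW}), but it does describe the inverse map: the insertion procedure with coinversion labels, which places $i$ into the block labeled $c_i$. Your map $\Phi$ is that insertion procedure in disguise. Your dichotomy (``$i \in M$ iff $c_i < k-m$'') is precisely the statement that the insertion rule places $i$ into an empty block iff $c_i$ is smaller than the current number of empty blocks; your formula $a_i = c_i - (k-m) + 1$ matches the nonempty-block label arithmetic; and the Lehmer-type reconstruction of the block order reproduces the ``$(c_\mu+1)$-th empty block from the right'' rule. The counting argument you give for the skip-sequence condition---that a bad tuple exists iff there are at least $n-k+1$ stationary steps, proved in both directions by tracking the running count $m$---is the same style of argument the paper uses in its proof of the boosted analogue, Theorem~\ref{thm-combinatorial-bijection} (where stationary steps become ``boosted'' entries and increments become block minima). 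In short: correct, and the same approach, just repackaged so that the set of block minima is extracted first and the insertion is split into phases rather than performed element by element.
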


For future reference we recall the inverse map introduced in the proof of the above theorem. This inverse map uses the following insertion procedure.

For $(B_1 \ | \ \cdots \ | \ B_k)$ a sequence of $k$ (possibly empty) sets of positive integers we define the \emph{coinversion labels} as follows. First, label the empty sets $0,1,\ldots,j$ from right to left, and then label the nonempty sets $j+1,\ldots,j+k-1$ from left to right. 

For a sequence $(c_1,\ldots,c_n)$ satisfying the conditions in 
Theorem~\ref{RW-bijection}, we construct an ordered set partition as follows. Start with a sequence $(\emptyset \ | \ \cdots \ | \ \emptyset)$ of $k$ copies of the empty set, and for $i = 1,2,\ldots,n$ insert the number $i$ in the block with label $c_i$ under the coinversion labeling.

For example, let $n = 7$, $k = 4$ and consider the sequence $c = (2,1,2,0,2,0,2)$. The resulting ordered set partition will be $(6 \ | \ 13 \ | \ 257 \ | \ 4)$, as shown by the following process, starting with the labeled sequence of blocks $(\emptyset^3 \ | \ \emptyset^2 \ | \ \emptyset^1 \ | \ \emptyset^0)$. 
\[
\begin{tabular}{c|c|c}
    $i$ & $c_i$ & updated labeled sequence of blocks \\
\hline
    $1$ & $2$ & $(\emptyset^2 \ | \ 1^3 \ | \ \emptyset^1 \ | \ \emptyset^0)$ \\
    $2$ & $1$ & $(\emptyset^1 \ | \ 1^2 \ | \ 2^3 \ | \ \emptyset^0)$ \\
    $3$ & $2$ & $(\emptyset^1 \ | \ 13^2 \ | \ 2^3 \ | \ \emptyset^0)$ \\
    $4$ & $0$ & $(\emptyset^0 \ | \ 13^1 \ | \ 2^2 \ | \ 4^3)$ \\
    $5$ & $2$ & $(\emptyset^0 \ | \ 13^1 \ | \ 25^2 \ | \ 4^3)$ \\
    $6$ & $0$ & $(6^0 \ | \ 13^1 \ | \ 25^2 \ | \ 4^3)$ \\
    $7$ & $2$ & $(6^0 \ | \ 13^1 \ | \ 257^2 \ | \ 4^3)$
\end{tabular}
\]

In our algebraic analysis of $S_n$
we will need a version of this insertion which maps the family
 of ordered set partitions of $[n]$ with \emph{at least} $k$ blocks bijectively onto
 a certain collection $(c_1, \dots, c_n)$ of length $n$ `code words' over the nonnegative integers.
In the bijection $\mathbf{code}$ of 
Theorem~\ref{RW-bijection}, the ordered set partition $(1|2|\cdots|m,m+1,\ldots,n)$ has code $(0,0,\ldots,0)$ for any number of blocks $m$, so 
we cannot simply take the union of these maps for $m \geq k$.

We resolve the problem in the above paragraph by working
with a different version of the coinversion code, which we will call the \emph{boosted coinversion code}. For an ordered set partition
$\sigma = (B_1 \ | \ \cdots \ | \ B_k)$ we define $\overline{\mathbf{code}}(\sigma) = (c_1,\ldots,c_n)$ as follows. Suppose $1 \leq i \leq n$ and $i \in B_j$, then
\[
c_i = 
\begin{cases}
|\{\ell > j \ : \ \min(B_\ell) > i\}| & \text{if } i = \min(B_j); \\
|\{\ell > j \ : \ \min(B_\ell) > i\}| + j & \text{if } i \neq \min(B_j).
\end{cases}
\]
Compared to the coinversion codes from before, the difference is that all the numbers corresponding to non-minimal elements of blocks are raised by one, and we say that these numbers are \emph{boosted}.

The remainder of the section will be devoted to the proof of the following theorem.

\begin{theorem}\label{thm-combinatorial-bijection}
Let $1 \leq k \leq n$. The map $\sigma \mapsto \overline{\mathbf{code}}(\sigma)$ is a bijection from the set of ordered set partitions of $[n]$ with
at least $k$ blocks to the family of nonnegative integer sequences such that
\begin{itemize}
    \item for all $1 \leq i \leq n$ we have $c_i < n$. 
    \item for any subset $S \subseteq [n]$ with $|S| = n-k+1$ the componentwise inequality $\gamma(S)^* \leq (c_1,\ldots,c_n)$ fails to hold. 
    \item for any $1 \leq i,d \leq n$ and any $T \subseteq[n-1]$ with $|T| = n-d$ and $\gamma(T)^* = (\gamma_{n-1},\ldots,\gamma_1)$, the componentwise inequality $(\gamma_{n-1},\ldots,\gamma_i,d,\gamma_{i-1},\ldots,\gamma_1) \leq (c_1,\ldots,c_n)$ fails to hold.
\end{itemize}
\end{theorem}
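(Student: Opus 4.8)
The plan is to stratify the domain by the number of blocks, writing the set of ordered set partitions of $[n]$ with at least $k$ blocks as the disjoint union $\bigsqcup_{m=k}^{n}\OP_{n,m}$, and on each stratum to compare $\overline{\mathbf{code}}$ with the Rhoades--Wilson bijection of Theorem~\ref{RW-bijection} applied with parameter $m$. The key observation is that for $\sigma\in\OP_{n,m}$ one has $\overline{\mathbf{code}}(\sigma)=\mathbf{code}(\sigma)+\epsilon(\sigma)$, where $\mathbf{code}(\sigma)$ is the ordinary coinversion code computed with $k$ replaced by $m$, and $\epsilon(\sigma)$ is the $0/1$ vector supported on the positions that are non-minimal in their block; this is immediate from the two displayed formulas, which differ only by the replacement of $j-1$ by $j$. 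Thus the theorem amounts to: (a) a boosted variant of the insertion procedure recalled above that inverts $\overline{\mathbf{code}}$, in which at each stage the empty blocks are labelled $0,1,\dots,e-1$ from the right and the nonempty blocks are labelled $e+1,e+2,\dots$ from the left, \emph{skipping} the label $e$; (b) the fact that boosted codes satisfy conditions (1)--(3); (c) the fact that every sequence satisfying (1)--(3) is a boosted code; and (d) disjointness of the strata, i.e.\ recoverability of $m$ from $\overline{\mathbf{code}}(\sigma)$.

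For the inverse map and injectivity, I would fix a trial block count $m$ and run the recursion $E_1:=m$, and having processed $1,\dots,i-1$, declare $i$ a \emph{block minimum} if $c_i<E_i$ and \emph{interior} if $c_i>E_i$, setting $E_{i+1}:=E_i-[\,i\text{ a block minimum}\,]$; call the run \emph{valid} if $c_i\neq E_i$ and $c_i\le m$ at every step and $E_{n+1}=0$. A valid run determines the set of block minima, hence --- applying the Rhoades--Wilson reconstruction to the de-boosted sequence $(c_i-[\,i\text{ interior}\,])_i$ --- an ordered set partition $\sigma\in\OP_{n,m}$, and unwinding the definitions gives $\overline{\mathbf{code}}(\sigma)=(c_1,\dots,c_n)$. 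Running the recursion with $m$ equal to the number of blocks of $\sigma$ inverts $\overline{\mathbf{code}}$ on $\OP_{n,m}$, so $\overline{\mathbf{code}}$ is injective on each stratum; full injectivity will follow from part (d), a valid run existing for at most one $m$, since a valid run pins down the block minima and $m$ is their number.

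Next I would verify the three conditions. Condition (1) is routine: for $\sigma\in\OP_{n,m}$ every entry of $\overline{\mathbf{code}}(\sigma)$ is at most $m\le n$, and the value $m$ can be attained only at an interior position, which forces $m<n$. Condition (2) comes from Theorem~\ref{RW-bijection}, which gives $\gamma(S)^*\not\le\mathbf{code}(\sigma)$ for all $S$ with $|S|=n-m+1$; one upgrades this to $\gamma(S)^*\not\le\overline{\mathbf{code}}(\sigma)$ for the larger sets $|S|=n-k+1$ by tracking how a reverse skip sequence interacts with the boosted positions, and conversely a violation of condition (2) exhibits an ordered set partition with fewer than $k$ blocks. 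Condition (3) is the genuinely new ingredient: I would show that a sequence $c$ dominates some vector of the form $(\gamma_{n-1},\dots,\gamma_i,d,\gamma_{i-1},\dots,\gamma_1)$ as in the statement if and only if \emph{no} trial value $m$ yields a valid run, i.e.\ if and only if $c$ is not a boosted code --- morally, such a dominating pattern is the combinatorial record of the obstruction ``the recursion is forced to take $c_{i'}=E_{i'}$ or to exceed $m$'' uniformly over all admissible block counts. Consequently the sequences satisfying (1) and (3) are precisely $\bigsqcup_{m\ge1}\overline{\mathbf{code}}(\OP_{n,m})$, adjoining (2) restricts to $m\ge k$, and the valid $m$ is unique.

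The main obstacle is the ``$\Leftarrow$'' direction of the characterization of condition (3), namely that a sequence dominating none of the spliced reverse skip sequences does admit a valid run. Because the admissible block count depends on $c$ in a non-monotone way --- an all-small sequence forces many blocks, whereas a trailing run $\dots,m,m,\dots$ of maximal entries forces very few --- one cannot simply start the insertion with $n$ blocks and discard empties at the end. Instead I would argue by contradiction: assuming every trial $m$ fails, track for each $m$ the first step at which the recursion breaks (either $c_{i'}=E_{i'}$ or $c_{i'}>m$), and extract from the positions declared block minima before that step a subset $T\subseteq[n-1]$ and an index $d$ so that $c$ dominates the associated spliced skip sequence. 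Verifying the relevant skip-sequence identity and checking that the extracted $T$ has the right cardinality is the technical core; the analogous bookkeeping needed for condition (2) --- matching the size-$(n-m+1)$ Rhoades--Wilson condition with the size-$(n-k+1)$ condition after boosting --- is of the same flavor but lighter.
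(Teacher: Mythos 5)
Your high-level plan is in the spirit of the paper's proof: both proceed by recovering the number of blocks and the set of block minima from the sequence (you call this a ``valid run,'' the paper calls it ``unboosting''), both observe that $\overline{\mathbf{code}}(\sigma)$ is $\mathbf{code}(\sigma)$ plus the $0/1$ indicator of non-minimal positions, and both produce condition-(3) witnesses $T$ from the points where insertion fails. However, as written the proposal has three concrete gaps.

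First, and most importantly, you acknowledge that the sufficiency of condition (3) --- the claim that a sequence dominating no spliced reverse skip sequence must admit a valid run for some $m$ --- is ``the technical core'' and you only sketch a contradiction argument. This is not a peripheral detail: it is the substance of the theorem. The paper carries it out by running the (deterministic) unboosting, showing first that at most $n-k$ entries are boosted, then that $c_1$ is unboosted, and then inductively that each failure at step $j$ yields a subset $T$ and index $d$ for which the spliced sequence $(\gamma_{n-1},\dots,\gamma_{n-j+1},d,\gamma_{n-j},\dots,\gamma_1)$ is dominated by $(c_1,\dots,c_n)$.

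Second, the proposed treatment of condition (2) by ``upgrading'' Theorem~\ref{RW-bijection} goes in the wrong monotonicity direction. The RW theorem gives $\gamma(S)^*\not\leq\mathbf{code}(\sigma)$ for $|S|=n-m+1$, but since $\overline{\mathbf{code}}(\sigma)\geq\mathbf{code}(\sigma)$ entrywise, dominance by $\gamma(S)^*$ becomes \emph{easier} after boosting, and moreover the sets in condition (2) have the larger size $n-k+1\geq n-m+1$. So the RW statement cannot be ``upgraded'' here; one must argue directly, as the paper does, by showing (inducting from the right along the support of $\gamma(S)^*$) that all $n-k+1$ implicated positions are non-minimal in $\sigma$, contradicting that $\sigma$ has at least $k$ blocks.

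Third, the claim that ``a valid run pins down the block minima and $m$ is their number'' is circular as stated: the set of block minima produced by your recursion depends on the trial value $m$ fed into it, so this does not by itself rule out two distinct $m$ both yielding valid runs for the same sequence. The paper avoids this issue entirely by making the unboosting a single $m$-independent right-to-left scan (mark the rightmost entry $\leq 0$, then the next unmarked entry $\leq 1$, and so on); the lemma preceding the theorem then shows this deterministic procedure recovers both $\mathbf{code}(\sigma)$ and the block count, giving injectivity across strata for free. If you want to keep the trial-$m$ formulation, you would need to prove separately that at most one $m$ can succeed.
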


The proof of the necessity of these conditions will be similar to that of the proof of \cite[Thm.2.2]{RW}. For the sufficiency of the 
conditions we use an insertion map similar to that considered above.
We begin by showing that both the number of blocks of an ordered set partition of $[n]$, as well as its classical coinversion code, can be 
recovered from its boosted coinversion code.

\begin{lemma}
Let $\sigma$ be an ordered set partition of $[n]$. Given the boosted coinversion code $\overline{\mathbf{code}}(\sigma)$ one can recover the coinversion code $\mathbf{code}(\sigma)$, as well as the number of blocks of $\sigma$. 
\end{lemma}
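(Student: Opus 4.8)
The plan is to reconstruct the data of $\sigma$ from $\overline{\mathbf{code}}(\sigma) = (c_1, \dots, c_n)$ in two stages: first pin down which coordinates were boosted (equivalently, which $i$ are non-minimal elements of their blocks), and then subtract $1$ from exactly those coordinates to recover $\mathbf{code}(\sigma)$, whose codomain in Theorem~\ref{RW-bijection} determines $k$ after we know $k$ — so really the first task is to extract $k$ itself. The key observation is that in $\overline{\mathbf{code}}(\sigma)$, a coordinate $c_i$ arising from a non-minimal element lies in the range $[j, j+\text{(something)}]$ while the minimal-element coordinates lie in $[0, \text{(number of later blocks with larger min)}]$; the point is to find a criterion, readable off the sequence alone, that separates the two cases.

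First I would recall the characterization of classical coinversion codes: the minimal elements of blocks are exactly those $i$ for which $c_i$ (in the unboosted code) records only "later blocks with larger minimum", and a standard fact (implicit in \cite{RW}) is that reading $i = 1, 2, \dots, n$ and tracking the insertion, one can identify block-minima recursively. Concretely, I would argue by downward or upward induction on $i$: process $i = 1, \dots, n$, maintaining the partially-built ordered set partition exactly as in the insertion procedure described above, but now using the rule that $i$ is declared a non-minimal element (its coordinate was boosted) if and only if $c_i \geq (\text{current number of nonempty blocks to the left of where label } c_i \text{ sits})$ — the inequality that distinguishes the "$|\{\ell > j : \min(B_\ell) > i\}| + j$" regime from the "$|\{\ell > j : \min(B_\ell) > i\}|$" regime. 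Once each $i$ is classified, subtracting $1$ from every boosted coordinate yields a sequence $(c_1', \dots, c_n')$, and I would check that this is precisely $\mathbf{code}(\sigma)$ by comparing the two displayed formulas term by term.

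To recover $k$, I would then invoke Theorem~\ref{RW-bijection}: once $(c_1', \dots, c_n')$ is known to be the classical coinversion code of \emph{some} ordered set partition, the number of blocks $k$ is the unique value for which $(c_1', \dots, c_n')$ satisfies both bullet conditions of that theorem — i.e. $k = 1 + \max\{\text{stuff}\}$ can be read off, for instance as the smallest $k$ with all $c_i' < k$ such that no reverse skip sequence $\gamma(S)^*$ with $|S| = n-k+1$ is dominated. Alternatively, and more cleanly, the number of blocks equals the number of $i \in [n]$ that were classified as block-minima in the insertion above, since every block has exactly one minimum; this gives $k$ directly and avoids re-deriving it from the code conditions.

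The main obstacle is verifying that the classification rule "$c_i$ is boosted iff [explicit inequality]" is actually correct and self-consistent throughout the insertion — that is, that at each step the partial information already reconstructed suffices to decide the current coordinate unambiguously, with no circularity. This amounts to checking that the quantity $|\{\ell > j : \min(B_\ell) > i\}|$ appearing in both branches is always strictly less than $j$ (so the two intervals $[0, j-1]$ for minima and $[j, 2j-1]$-ish for non-minima are genuinely disjoint given knowledge of $j$), and that $j$ — the index of the block containing $i$ — is itself determined by the coinversion labels already assigned to the non-empty blocks built from $1, \dots, i-1$ together with the value $c_i$. I expect this to follow from a careful bookkeeping argument tracking the coinversion labels during insertion, essentially mirroring the injectivity half of the proof of \cite[Thm.~2.2]{RW}, but it is the step that requires genuine care rather than routine manipulation.
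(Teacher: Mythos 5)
Your two-stage plan (first decide which coordinates are boosted, then unboost) is the right idea, and you correctly flag the danger of circularity — but the forward left-to-right pass you propose in the second paragraph does not avoid it. To run the insertion procedure from the start you must lay out $k$ empty blocks and assign coinversion labels, which requires knowing $k$; but $k$ is precisely what you are trying to extract, and you propose to read it off only after the classification is done. More fundamentally, the correct threshold for declaring $c_i$ unboosted is $c_i \leq r_i$ where $r_i$ is the number of block-minima strictly to the right of $i$ (equivalently, $c_i < k - m_{i-1}$ where $m_{i-1}$ is the number of minima among $1,\dots,i-1$). A forward pass has access to $m_{i-1}$ but not to $r_i$ or $k$, and the prefix $(c_1,\dots,c_i)$ alone genuinely fails to determine the classification. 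Concretely, for $n = 4$ the ordered set partitions $\sigma = (12 \mid 34)$ and $\sigma' = (2 \mid 3 \mid 1 \mid 4)$ have boosted codes $\overline{\mathbf{code}}(\sigma) = (1,2,0,2)$ and $\overline{\mathbf{code}}(\sigma') = (1,2,1,0)$; they agree in positions $1,2$, yet the entry $c_2 = 2$ is boosted in the first case ($2$ is non-minimal) and unboosted in the second ($2$ is minimal). No local forward rule can distinguish these.

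The paper's proof resolves this by scanning $\overline{\mathbf{code}}(\sigma)$ from \emph{right to left}: the rightmost $0$ is marked (it is the last block-minimum), then the rightmost entry $\leq 1$ among what remains to its left, then the rightmost entry $\leq 2$, and so on. At each step the threshold is exactly the count of minima already found, so the circularity disappears — going backward, $r_i$ is already known when you reach position $i$, whereas going forward it is not. Once the marked positions are identified, $k$ is their number and the classical code is obtained by subtracting $1$ from every unmarked entry, exactly as you intended. So your high-level outline is sound; the fix is to reverse the direction of the scan and replace your criterion with the explicit right-to-left threshold rule, for which the verification is a short induction rather than the delicate bookkeeping you anticipated.
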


\begin{proof}
Note that the second part is immediate once we have recovered $\mathbf{code}(\sigma)$, as the number of blocks will be equal to the number of unboosted numbers, which is easily found by comparing $\overline{\mathbf{code}}(\sigma)$ and $\mathbf{code}(\sigma)$.

Given a boosted coinversion code $(c_1,\ldots,c_n)$ corresponding to an ordered set partition with $\ell$ blocks (where $\ell$ is unknown), we can think of creating the ordered set partition by following the same procedure as described before, with the only difference that the labels of all the nonempty blocks should be raised by one. 

No matter what, at some point we will fill in the last nonempty block with some number $i$, which necessarily has $c_i = 0$. Additionally, from the boosting, it is clear that $c_j > 0$ for all $j > i$, hence we can recover $i$ by looking for the last entry in our sequence that equals $0$. 

Now, assume that we have identified that $i_1 < \ldots < i_j$ are minimal in their block and that all other numbers in $[i_1,n]$ are not minimal in their block. If $i_1 = 1$ we are done. Otherwise, it is clear that we must have at least $j+1$ blocks (as clearly $1$ will be minimal in its block). Now, let $i_0 < i_1$ be the largest number that is also minimal in its block. By the inverse map, this must correspond to some index with $c_{i_0} \leq j$, as at the time of inserting $i_0$ there are exactly $j+1$ empty blocks, labeled $0,1,\ldots,j$. Additionally, for any $i_0 < i < i_1$, at the time of insertion there will be exactly $j$ empty blocks, hence the coinversion label of $i$ will be at least $j+1$ (because of the boosting). Therefore, given $\overline{\mathbf{code}}(\sigma)$ we can recognize $i_0$ as the largest index $i_0 < i_1$ with $c_{i_0} \leq j$. By induction we are done.
\end{proof}

Explicitly, the procedure above is as follows. Given a sequence $(c_1,\ldots,c_n)$, trace the sequence from right to left, marking the first $0$, then the first $0$ or $1$, then the first $0$, $1$ or $2$, etcetera. Now, decrease all the unmarked numbers by $1$ and one recovers the coinversion code. We call this procedure the \emph{unboosting} of a sequence $(c_1,\ldots,c_n)$. 

As an example, consider the boosted coinversion code $c = (2,4,2,4,0,0,1,4)$. Working from right to left we mark $c_6$ as it is the first $0$, then $c_5$ as it is at most $1$, then $c_3$ as it is the next number at most $2$ and finally $c_1$ as it is the next number that is at most $3$. Therefore, the number of blocks is equal to $4$ and the unboosted coinversion code is given by $(2,3,2,3,0,0,0,3)$. Applying the earlier bijection this coinversion code corresponds to $(37 \ | \ 124 \ | \ 6 \ | \ 58)$.

We are now ready to prove the main result of this section. 

\begin{proof}[Proof of Theorem \ref{thm-combinatorial-bijection}]
We first prove the necessity of the conditions.
Let $\sigma$ be an ordered set partition of $[n]$ with at least $k$ blocks and let $\overline{\mathbf{code}}(\sigma) = (c_1, \dots, c_n)$ be its
boosted coinversion code.
\begin{itemize}
    \item If $i$ is minimal in its block, $c_i$ will be at most the number of blocks following the block containing $i$, which is at most $n-1$. If $i$ is not minimal we have at most $n-1$ blocks, and if $i \in B_j$ we have
    \[
    c_i = j + |\{\ell > j \ : \ \min(B_\ell)>i\}| \leq j + |\{\ell > j \ : \ \text{the $\ell^{\text{th}}$ block exists}\}| \leq n-1.
    \]
    \item Suppose $S = \{n+1-t_{n-k+1},\ldots,n+1-t_1\}$ (with $t_1 < \ldots < t_{n-k+1}$) satisfies $\gamma(S)^* \leq (c_1,\ldots,c_n)$. We show
    that none of the numbers $\{t_1,\ldots,t_{n-k+1}\}$ is minimal in its block of $\sigma$, contradicting that $\sigma$ has at least $k$ blocks. 
    
    If $t_{n-k+1}$ is minimal in its block, then
    \begin{align*}
    c_{t_{n-k+1}} &= |\{ \ell > t_{n-k+1} : \begin{smallmatrix} 
    \ell \text{ is minimal in its block and} \\ \text{occurs to the right of } t_{n-k+1} \text{ in } \sigma \end{smallmatrix} \}|    \\
        &\leq |\{t_{n-k+1}+1,\ldots,n-1,n\}| = n - t_{n-k+1}.
    \end{align*}
    However, the term in $\gamma(S)^*$ in position $t_{n-k+1}$ equals $n-t_{n-k+1} + 1$, hence we conclude that $t_{n-k+1}$ is not minimal in its block. 
    
    Now, if $t_{n-k}$ were minimal in its block, we would have
    \begin{align*}
    c_{t_{n-k}} &= |\{ \ell > t_{n-k} : \ell \text{ is minimal in its block and occurs to the right of } t_{n-k} \text{ in } \sigma \}| \\
    &\leq |\{t_{n-k}+1,\ldots,n-1,n\}-\{t_{n-k+1}\}| = n - t_{n-k} - 1.
    \end{align*}
    But again, the term in $\gamma(S)^*$ in position $t_{n-k}$ equals $n - t_{n-k}$, which shows that $t_{n-k}$ cannot be minimal in its block either. An inductive argument now shows that none of $\{t_1,\ldots,t_{n-k+1}\}$ is minimal in its block.
    \item 
    For $d = n$ this is equivalent to the fact that $c_i < n$ for all $i$, so assume $1 \leq d < n$. 
    Assume for contradiction that $(\gamma_{n-1},\ldots,\gamma_{i},d,\gamma_{i-1},\ldots,\gamma_1) \leq (c_1,\ldots,c_n)$ where
    $(\gamma_{n-1}, \dots, \gamma_1) = \gamma(T)^*$ for some $T \subseteq [n-1]$ of size $|T| = n-d$. 
    Since $c_{n+1-i} \geq d$, this implies that $\sigma$ has at least $d$ blocks. 
    Let $T = \{i_1 < \ldots < i_t \leq n+1-i < i_{t+1} < \ldots < i_{t+s}\}$. 
    By the same argument used in the previous bullet,
    we see that all $n+1-i_j$ with $j \leq t$ are not minimal in their block. Now we consider two cases.
    \begin{itemize}
        \item If $n+1-i$ is not minimal in its block either, we can continue the argument as in the previous case to show that none of $n+1-i_j$ is minimal in its block. In particular we have $1 + (n-d)$ elements that are not minimal in their respective blocks, contradicting the fact that $\sigma$ has at least $d$ blocks.
        \item Now suppose that $n+1-i$ is minimal in its block. Since $c_{n+1-i} = d$, this implies that among $\{n+2-i,\ldots,n\}$ 
        at least $d$ numbers are also minimal in their respective blocks. In particular, 
        there are at least $d$ numbers that are not of the form $n+1-j$ with $j \in T$.
        But this implies that $T$ has size at most $(n-1) - d < n-d$, which is a contradiction.
    \end{itemize}
\end{itemize}

Now, we show that these conditions are sufficient. Given a sequence $(c_1,\ldots,c_n)$ we can first unboost the sequence (as we can apply this procedure to every sequence of nonnegative integers) to determine how many blocks our intended ordered set partition must have. Given this extra information, we can basically run the same inverse map as before, with the exception that we should increase the label of every nonempty block by $1$. It now suffices to check that we don't run into any troubles by doing so. Our proof will go through the following steps.
\begin{itemize}
    \item First we will show that the unboosting procedure concludes that there are at most $n-k$ boosted numbers, as this will ensure that the ordered set partition we aim for has at least $k$ blocks.
    \item Then we will inductively show that can basically run the same inverse map as before.
    \begin{itemize}
        \item First we show that the conclusion of the unboosting is that $1$ is unboosted, ensuring we have enough blocks to insert $1$ as a minimal element in its block.
        \item After that we will show that if the first $j-1$ numbers have been placed, we can place $j$ following the appropriate procedure. This argument will depend on whether $j$ is supposed to be minimal in its block or not (something that we know from the unboosting procedure).
    \end{itemize}
\end{itemize}

We will now prove each of these steps. 

\begin{itemize}
    \item Assume that we have $t$ boosted numbers $c_{n+1-i_j}$ (with $i_1 < \ldots < i_t$) and assume that $t \geq n-k+1$. Let $S = \{i_1,\ldots,i_{n-k+1}\}$, then we claim that $(c_1,\ldots,c_n) \geq \gamma(S)^*$. If $i \not \in S$, we have $\gamma(S)*_{n+1-i} = 0$, so $c_{n+1-i} \geq \gamma(S)^*_{n+1-i}$ indeed holds. Furthermore, for $i = i_j$ by assumption on $c_{n+1-i_j}$ there are $(i_j-j)$ unboosted numbers to the right of $n+1-i_j$. Therefore, since $c_{n+1-i_j}$ was boosted, we have $c_{n+1-i_j} \geq i_j-j+1 = \gamma(S)^*_{n+1-i_j}$, as desired.
    \item As mentioned before, we now show that we can run the inverse map without any issues.
        \begin{itemize}
        \item If $c_1 = 0$ it is clear that we can insert $1$, so assume $c_1 = d$ with $1 \leq d \leq n-1$. Our goal is to show that in the unboosting procedure we conclude that $1$ has to be minimal in its block. As $c_1 = d$ this happens precisely if the procedure shows that among $\{2,3,\ldots,n\}$ at least $d$ numbers were not boosted. For the sake of contradiction, assume that we have at least $n-d$ boosted numbers, and let the largest $n-d$ be $n+1-i_1 > n+1-i_2 > \ldots > n+1-i_{n-d}$. Let $T = \{i_1,\ldots,i_{n-d}\}$ then by a similar argument to before we have $(c_1,c_2,\ldots,c_n) \geq (d,\gamma_{n-1},\ldots,\gamma_1)$ where $(\gamma_{n-1},\ldots,\gamma_1) = \gamma(T)^*$.
        \item Assume that the inverse map successfully inserted all the numbers in $[j-1]$ (with $j \geq 2$) and that we now try to insert $j$ according to $c_j$. 
        
        First assume that $c_j = t$ is unboosted. Since this is unboosted, there are still at least $t$ unboosted numbers among $\{c_{j+1},\ldots,c_n\}$. As so far only indices corresponding to unboosted numbers have been inserted in empty blocks, and the number of total blocks it the number of unboosted numbers, we have at least $t+1$ empty blocks at this point. As a result, there will be some empty block labeled with $t$, so we can insert $j$ into an empty block, as desired.
        
        Hence, assume that $c_j$ was boosted. Suppose that at the time we still have $t$ nonempty blocks, then by the unboosting procedure we know that $c_j \geq t+1$, so we can insert $j$ appropriately, unless $c_j$ is too big. In other words, the only thing that can go wrong is that there were $\ell$ unboosted numbers (hence $\ell$ blocks in the ordered set partition), but that $c_j \geq \ell+1$. Let $n+1-i_1 > \ldots > n+1-i_a > j > n - i_{a+1} > \ldots > n-i_{n-\ell-1}$ be all the boosted numbers. But then, for $T = \{i_1,\ldots,i_{n-\ell-1}\}$ of size $n - (\ell+1)$, with $\gamma(T)^* = (\gamma_{n-1},\ldots,\gamma_1)$, we have $(c_1,\ldots,c_n) \geq (\gamma_{n-1},\ldots,\gamma_{n-j+1},\ell+1,\gamma_{n-j},\ldots,\gamma_1)$, a contradiction. \qedhere
        \end{itemize}
\end{itemize}
\end{proof}

\section{The algebraic quotient} \label{sec-algebra}

Recall from
the introduction that a word 
$w = w_1w_2\cdots w_n$ on the alphabet $\ZZ_{>0}$ is \emph{packed} if whenever $i+1$ appears, then so does $i$. 
It will be convenient for our inductive arguments to consider packed words in which every letter in some segment $1 \leq i \leq k$ must appear. 
To this end, we define
\begin{equation}
    \WWW_{n,k} := \{ \text{length $n$ packed words $w = w_1 w_2 \dots w_n$ \,:\, the letters $1, 2, \dots, k$ appear in $w$} \}.
\end{equation}
Words in $\WWW_{n,k}$ are in bijection with ordered set partitions of $[n]$ with at least $k$ blocks.
We have the further identifications $\WWW_{n,1} = \WWW_n$ and $\WWW_{n,n} = \symm_n.$

The symmetric group $\symm_n$ acts on $\WWW_{n,k}$ by the rule 
$\sigma \cdot (w_1 \dots w_n) := w_{\sigma(1)} \dots w_{\sigma(n)}$.
The quotient rings $S_{n,k}$ of the following definition will give a graded refinement of this action. 
Their defining ideals $J_{n,k}$ contain the ideal $J_n$ defining the ring $S_n$ appearing in the introduction.

\begin{definition}
Let $J_{n,k} \subseteq \QQ[\xx_n]$ be the ideal
\[
J_{n,k} := J_n + \langle e_n ,e_{n-1}, \ldots, e_{n-k+1} \rangle
\]
and let $S_{n,k} := \mathbb{Q}[\xx_n]/J_{n,k}$ be the corresponding quotient ring.
\end{definition}

Each of the quotients $S_{n,k}$ is a graded $\symm_n$-module. Their defining ideals are nested according to
$J_n = J_{n,n} \subseteq J_{n,n-1} \subseteq \cdots \subseteq J_{n,1}$.
We study $S_{n,k}$ by making use of a point locus $X_{n,k} \subseteq \QQ^n$ corresponding to $\WWW_{n,k}.$
Fix $n$ distinct rational numbers $\alpha_1, \dots, \alpha_n \in \QQ$. For any packed word $w_1 \dots w_n \in \WWW_{n,k}$, we have 
a corresponding point $(\alpha_{w_1}, \dots, \alpha_{w_n}) \in \QQ^n$. We let $X_{n,k} \subseteq \QQ^n$ be the family of points corresponding
to all packed words in $\WWW_{n,k}.$

The set $X_{n,k} \subseteq \QQ^n$ is closed under the coordinate-permuting action of $\symm_n$ and we have an identification
$\QQ[\WWW_{n,k}] \cong \QQ[X_{n,k}]$.
As explained in the introduction, we have isomorphisms of ungraded $\symm_n$-modules
\begin{equation*}
    \QQ[\WWW_{n,k}] \cong \QQ[X_{n,k}] \cong \QQ[\xx_n]/\mathbf{I}(X_{n,k}) \cong \QQ[\xx_n]/\mathbf{T}(X_{n,k}).
\end{equation*}
It turns out that $\mathbf{T}(X_{n,k})$ coincides with $J_{n,k}$.

\begin{theorem}
\label{thm-ungraded-structure}
    For any $1 \leq k \leq n$, we have the ideal equality $J_{n,k} = \mathbf{T}(X_{n,k})$.
    Consequently, we have an isomorphism of ungraded $\symm_n$-modules
    $\QQ[\WWW_{n,k}] \cong S_{n,k}$.
\end{theorem}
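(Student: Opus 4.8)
The plan is to use the orbit harmonics machinery together with the combinatorial bijection of Theorem~\ref{thm-combinatorial-bijection}. Since $X_{n,k}$ is a finite $\symm_n$-stable locus, we automatically have the vector space and ungraded $\symm_n$-module isomorphisms $\QQ[\WWW_{n,k}] \cong \QQ[X_{n,k}] \cong \QQ[\xx_n]/\mathbf{I}(X_{n,k}) \cong \QQ[\xx_n]/\mathbf{T}(X_{n,k})$, so the content of the theorem is the ideal equality $J_{n,k} = \mathbf{T}(X_{n,k})$. The standard strategy is a ``squeeze'' argument: first I would show the containment $J_{n,k} \subseteq \mathbf{T}(X_{n,k})$, and then show $\dim_\QQ \QQ[\xx_n]/J_{n,k} \le |\WWW_{n,k}| = \dim_\QQ \QQ[\xx_n]/\mathbf{T}(X_{n,k})$; since $J_{n,k} \subseteq \mathbf{T}(X_{n,k})$ forces $\dim \QQ[\xx_n]/J_{n,k} \ge \dim \QQ[\xx_n]/\mathbf{T}(X_{n,k})$, equality of dimensions upgrades the containment to an equality of ideals.

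For the containment $J_{n,k} \subseteq \mathbf{T}(X_{n,k})$ I would exhibit, for each generator of $J_{n,k}$, an explicit element of $\mathbf{I}(X_{n,k})$ whose top-degree component is that generator. The generators $e_{n-k+1}, \dots, e_n$ are handled as in the $R_{n,k}$ theory of \cite{HRS}: a point of $X_{n,k}$ has coordinates drawn from $\{\alpha_1,\dots,\alpha_n\}$ using at least $k$ distinct values, so the polynomial $\prod_{i=1}^n (t - x_i)$ evaluated at $t = \alpha_j$ and suitable symmetric-function manipulations produce vanishing polynomials whose top components are the $e_{n-r}$ for $r < k$; alternatively one checks directly that $e_{n-r}$ vanishes to the appropriate order. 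For the generators $x_i^d \cdot e_{n-r}^{(i)}$ with $1 \le r \le d$, the idea is that on $X_{n,k}$, if the $i$-th coordinate $x_i$ takes a value $\alpha$ that is ``new'' then the remaining coordinates involve at most finitely many constraints; concretely I would build a polynomial of the form $\big(\prod_{j}(x_i - \alpha_j)\big) \cdot (\text{something})$ lying in $\mathbf{I}(X_{n,k})$, expand, and identify the top-degree term as $x_i^d e_{n-r}^{(i)}$ up to scalar. This is the kind of verification carried out for the analogous Demazure-type generators in \cite{RW}, and I expect it to be a somewhat delicate but ultimately routine computation with elementary symmetric polynomials in $n-1$ variables.

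The heart of the argument is the dimension bound $\dim_\QQ \QQ[\xx_n]/J_{n,k} \le |\WWW_{n,k}|$, and this is where Theorem~\ref{thm-combinatorial-bijection} enters. I would work with the lexicographic monomial order and show that every monomial $x_1^{c_1} \cdots x_n^{c_n}$ whose exponent vector $(c_1,\dots,c_n)$ \emph{fails} one of the three conditions of Theorem~\ref{thm-combinatorial-bijection} lies in $\mathrm{LM}(J_{n,k})$. Each of the three forbidden patterns should correspond to a leading term of one of the generators of $J_{n,k}$: the bound $c_i < n$ comes from the variable powers $x_i^n \in J_{n,k}$; the skip-sequence condition on subsets $S$ of size $n-k+1$ comes from the leading monomials of products built from $e_{n-k+1},\dots,e_n$ (exactly as in the HRS/Rhoades--Wilson analysis of $R_{n,k}$); and the third, ``boosted'' condition involving $(\gamma_{n-1},\dots,\gamma_i,d,\gamma_{i-1},\dots,\gamma_1)$ should match the leading monomials of the generators $x_i^d e_{n-r}^{(i)}$ after taking suitable products. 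Thus the standard monomials of $\QQ[\xx_n]/J_{n,k}$ are contained in the set of exponent vectors satisfying all three conditions, which by Theorem~\ref{thm-combinatorial-bijection} is in bijection (via $\overline{\mathbf{code}}^{-1}$) with ordered set partitions of $[n]$ with at least $k$ blocks, hence with $\WWW_{n,k}$. This yields the desired upper bound on the dimension.

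The main obstacle I anticipate is the precise bookkeeping in the leading-monomial analysis of the third bullet: one must identify, for each forbidden boosted pattern, an explicit polynomial combination of the generators $x_i^d e_{n-r}^{(i)}$ (and possibly the $e_{n-r}$) whose leading monomial under lex order is exactly $x_1^{c_1}\cdots x_n^{c_n}$. This requires understanding how the lex-leading term of a product like $x_i^d \cdot e_{n-r}^{(i)} \cdot (\text{monomial})$ interacts with the skip sequence $\gamma(T)^*$, and carefully matching the insertion-of-$d$-in-position-$i$ operation in Theorem~\ref{thm-combinatorial-bijection} with multiplication by $x_i^d$. Once the dictionary between the three combinatorial conditions and the three families of generators (variable powers, the $e_{n-r}$'s, and the $x_i^d e_{n-r}^{(i)}$'s) is pinned down, the argument closes: the containment of ideals plus the matching dimensions give $J_{n,k} = \mathbf{T}(X_{n,k})$, and hence $\QQ[\WWW_{n,k}] \cong S_{n,k}$ as ungraded $\symm_n$-modules.
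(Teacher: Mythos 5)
Your overall strategy is exactly the one the paper uses: orbit harmonics gives the isomorphisms for free, so the task reduces to the ideal equality; that in turn is obtained by a squeeze argument, with the containment $J_{n,k}\subseteq \mathbf{T}(X_{n,k})$ coming from explicit rational-function constructions and the dimension bound coming from a lex-Gr\"obner analysis that matches standard monomials to the conditions of Theorem~\ref{thm-combinatorial-bijection}. So the plan is sound. The one place where your proposal is not merely vague but points in a slightly wrong direction is the claim that the second and third forbidden patterns arise as ``leading monomials of products built from'' the $e_{n-r}$ and $x_i^d e_{n-r}^{(i)}$. Monomial multiples of these generators do not, on their own, produce leading terms covering the whole forbidden set. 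The paper instead invokes \cite[Lem.\ 3.4]{HRS}, which shows that whenever a consecutive run $e_n,\dots,e_{n-k+1}$ (in some variable set) lies in an ideal, so does every Demazure character $\kappa_{\gamma(S)}$ with $|S| = n-k+1$; the lex leading monomial of $\kappa_{\gamma(S)}$ is $\gamma(S)^*$, not a monomial multiple of an $e_j$. Applied once to $\{x_1,\dots,x_n\}$ this handles the second bullet, and applied to $\{x_1,\dots,\widehat{x_i},\dots,x_n\}$ and then multiplied by $x_i^d$ it handles the third. You correctly flagged this step as the main obstacle; the resolution is the Demazure-character lemma rather than a direct product computation.

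Two tiny additional notes: your plan in the first bullet (the bound $c_i < n$ ``comes from the variable powers $x_i^n$'') is fine but in the paper this case is absorbed into the third bullet by allowing $d = n$ and $T = \emptyset$; and the construction of the vanishing polynomials for $x_i^d e_{n-r}^{(i)}$ in the paper is via the generating function
\[
(x_i-\alpha_1)\cdots(x_i-\alpha_d)\cdot\frac{\prod_{j\neq i}(1-tx_j)}{\prod_{j=1}^d(1-t\alpha_j)},
\]
whose $t^{n-r}$ coefficient vanishes on $X_{n,k}$ because the rational function collapses to a polynomial of degree $n-1-d < n-r$ in $t$ at every point of the locus. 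This is essentially the construction you sketched, made precise.
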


\begin{proof}
    To show that $J_{n,k} \subseteq \mathbf{T}(X_{n,k})$, it suffices to show that every generator of $J_{n,k}$ arises as the highest degree component 
    of some polynomial in $\mathbf{I}(X_{n,k}).$
    Fix $1 \leq i\leq n$ and $1 \leq r \leq d$; we begin by showing that the generator 
     $x_i^d e_{n-r}^{(i)}$ lies in $\mathbf{T}(X_{n,k}).$

Note that if $(x_1,\ldots,x_n) \in X_{n,k}$, we either have $x_i \in \{\alpha_1,\ldots,\alpha_d\}$, or for any $1 \leq j \leq d$ the number $\alpha_d$ must appear among $\{x_1,\ldots,x_{i-1},x_{i+1},\ldots,x_n\}$. We let $t$ be a new variable, and define the function
\[
f(x_1,\ldots,x_n,t) := (x_i-\alpha_1) \cdots (x_i-\alpha_d) \cdot \frac{(1-tx_1)\cdots(1-tx_{i-1})(1-tx_{i+1})\cdots(1-tx_n)}{(1-t\alpha_1)\cdots(1-t\alpha_d)}
\]
and expanding this function in terms of the parameter $t$ yields
\begin{align*}
    f(x_1,&\ldots,x_n,t) = \\
    &(x_i-\alpha_1) \cdots (x_i-\alpha_d) \cdot \sum_{r \geq 0} \left(\sum_{a+b=r} (-1)^a e_a^{(i)} \cdot h_b(\alpha_1,\ldots,\alpha_d)\right) t^r
\end{align*}
Specialization of $f(x_1,\ldots,x_n,t)$ at $(x_1,\ldots,x_n) = (\beta_1,\ldots,\beta_n)$ yields an element of $\QQ[[t]]$. 
We analyze this specialization when $(\beta_1,\ldots,\beta_n) \in X_{n,k}$. 
If $\beta_i \in \{\alpha_1,\ldots,\alpha_d\}$, then  $f(\beta_1,\ldots,\beta_n,t) = 0$. Otherwise, 
$d$ of the terms in the numerator of $f$ will cancel with the $d$ terms in the denominator, so that
hence $f(\beta_1,\ldots,\beta_n,t)$ is a polynomial of degree $(n-1)-d$ in $t$. 
Either way, the coefficient of $t^{n-r}$ in $f(x_1,\ldots,x_n,t)$ vanishes on $X_{n,k}$, so that
\[
(x_i-\alpha_1) \cdots (x_i-\alpha_d) \cdot \left(\sum_{a+b=n-r} (-1)^a e_a^{(i)} \cdot h_b(\alpha_1,\ldots,\alpha_d)\right) \in \mathbf{I}(X_{n,k})
\]
and taking the highest degree component gives
\[
x_i^d \cdot (-1)^{n-r} e_{n-r}^{(i)} \in \mathbf{T}(X_{n,k}).
\]

The remaining generators $e_d$ (for $d > n-k$) are handled by a similar argument. We consider the rational function
\begin{align*}
g(x_1, \dots, x_n, t) &:= \frac{(1-t x_1)(1 - t x_2) \cdots (1 - t x_n)}{(1 - t \alpha_1)(1 - t\alpha_2) \cdots (1 - t \alpha_k) } \\
&= \sum_{r \geq 0}  \left(
\sum_{a + b = r} (-1)^a e_a \cdot h_b(\alpha_1, \dots, \alpha_k)
\right) \cdot t^r.
\end{align*}
Evaluating $(x_1, \dots, x_n)$ at a point in $X_{n,k}$ forces the $k$ factors in the denominator to cancel with $k$ factors in the numerator,
yielding a polynomial of degree $n-k$ in $t$. For any $d > n-k$, we conclude that
\[
\sum_{a + b = d} (-1)^a e_a \cdot h_b(\alpha_1, \dots, \alpha_k) \in \mathbf{I}(X_{n,k}),
\]
which implies
\[
e_d \in \mathbf{T}(X_{n,k}).
\]
This proves the containment $J_{n,k} \subseteq \mathbf{T}(X_{n,k}),$ so that
\begin{equation}
\label{inequality-chain}
    \dim \QQ[\xx_n]/J_{n,k} \geq \dim \QQ[\xx_n]/\mathbf{T}(X_{n,k}) = |\WWW_{n,k}|
\end{equation}

In light of Equation~\eqref{inequality-chain},  to prove the desired equality $J_{n,k} = \mathbf{T}(X_{n,k})$ it is enough to show that
$\dim(\QQ[\xx_n]/J_{n,k}) \leq |\WWW_{n,k}|$. This is a Gr\"obner theory argument. 

Since the elementary symmetric polynomials
$e_n, e_{n-1}, \dots, e_{n-k+1}$ in the full variable set $\{x_1, \dots, x_n\}$ lie in $J_{n,k}$,
\cite[Lem. 3.4]{HRS} implies that for any subset $S \subseteq [n]$ with $|S| = n-k+1$, the {\em Demazure character} 
$\kappa_{\gamma(S)}$ corresponding to the length $n$ sequence $\gamma(S)$ also lies in $J_{n,k}$. The lexicographical leading 
monomial of $\kappa_{\gamma(S)}$ has exponent sequence $\gamma(S)^*$. Similarly, for $1 \leq i, d \leq n$, since 
\begin{equation*}
    x_i^d \cdot e_{n-d}^{(i)}, \dots, x_i^d \cdot e_{n-1}^{(i)} \in J_{n,k}, 
\end{equation*}
for any $T \subseteq [n-1]$ of size $|T| = n-d$, \cite[Lem. 3.4]{HRS} again implies that 
\begin{equation*}
    x_i^d \cdot \kappa_{\gamma(T)}(x_1, \dots, x_{i-1}, x_{i+1}, \dots, x_n) \in J_{n,k}.
\end{equation*}
Writing $\gamma(T)^* = (\gamma_{n-1}, \dots, \gamma_1)$, the lexicographical leading term of 
$x_i^d \cdot \kappa_{\gamma(T)}(x_1, \dots, x_{i-1}, x_{i+1}, \dots, x_n)$ is 
$(\gamma_{n-1}, \dots, \gamma_i, d, \gamma_{i-1}, \dots, \gamma_1)$. It follows that
\begin{quote}
    the exponent sequence $(c_1, \dots, c_n)$  of any member of the standard monomial basis of $\QQ[\xx_n]/J_{n,k}$
    satisfies the conditions in the statement of Theorem~\ref{thm-combinatorial-bijection}.
\end{quote}
Theorem~\ref{thm-combinatorial-bijection} implies the desired dimension bound $\dim \QQ[\xx_n]/J_{n,k} \leq |\WWW_{n,k}|$, completing the proof.
\end{proof}

The standard monomial basis of $S_{n,k}$ is governed by coinversion codes.

\begin{corollary}
    The standard monomial basis of $S_{n,k}$ with respect to the lexicographical term ordering are the monomials
    $x_1^{c_1} \cdots x_n^{c_n}$ where $(c_1, \dots, c_n) = \overline{\mathbf{code}}(\sigma)$ is the boosted coinversion code
    of some ordered set partition $\sigma$ of $[n]$ with at least $k$ blocks.
\end{corollary}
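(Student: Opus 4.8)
The plan is to combine the two main theorems already established, namely Theorem~\ref{thm-ungraded-structure} and Theorem~\ref{thm-combinatorial-bijection}, together with the standard facts about standard monomial bases reviewed in the Gröbner theory subsection. The argument should be short, since essentially all the work has been done; the corollary is really just a bookkeeping statement extracting what was proved along the way.

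First I would recall that, by the discussion in Section~\ref{sec-background}, the standard monomial basis of $\QQ[\xx_n]/J_{n,k}$ with respect to the lexicographic term order consists precisely of those monomials $x_1^{c_1}\cdots x_n^{c_n}$ whose exponent sequence $(c_1,\dots,c_n)$ is not the exponent sequence of any leading monomial of an element of $J_{n,k}$. In the proof of Theorem~\ref{thm-ungraded-structure} it was shown that every such exponent sequence $(c_1,\dots,c_n)$ satisfies the three conditions in the statement of Theorem~\ref{thm-combinatorial-bijection}: the Demazure characters $\kappa_{\gamma(S)}$ contribute the forbidden patterns $\gamma(S)^*$ for $|S|=n-k+1$, the products $x_i^d\cdot\kappa_{\gamma(T)}(x_1,\dots,x_{i-1},x_{i+1},\dots,x_n)$ contribute the forbidden patterns $(\gamma_{n-1},\dots,\gamma_i,d,\gamma_{i-1},\dots,\gamma_1)$ for $|T|=n-d$, and the power $x_i^n$ (the $d=n$ case, or the degree-$0$ convention) forces $c_i<n$. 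Hence every standard monomial has exponent sequence lying in the image of $\overline{\mathbf{code}}$, so the number of standard monomials is at most $|\WWW_{n,k}|$.

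Next I would invoke the equality $\dim S_{n,k}=|\WWW_{n,k}|$, which follows from Theorem~\ref{thm-ungraded-structure} since $S_{n,k}=\QQ[\xx_n]/J_{n,k}\cong\QQ[\WWW_{n,k}]$ as vector spaces. Because the standard monomial basis has exactly $\dim S_{n,k}=|\WWW_{n,k}|$ elements, and we have just produced an injection from this basis into the set of boosted coinversion codes of ordered set partitions of $[n]$ with at least $k$ blocks (a set which, by Theorem~\ref{thm-combinatorial-bijection}, also has cardinality $|\WWW_{n,k}|$), this injection must be a bijection. Therefore the exponent sequences of the standard monomials are exactly the sequences $\overline{\mathbf{code}}(\sigma)$, $\sigma\in\OP_n$ with at least $k$ blocks, which is the assertion of the corollary.

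I do not anticipate a genuine obstacle here; the only point requiring any care is making the cardinality count airtight — one must be sure that the containment of exponent sequences in the Theorem~\ref{thm-combinatorial-bijection} family is matched by an equality of cardinalities rather than merely an inequality, which is exactly why the dimension statement $\dim S_{n,k}=|\WWW_{n,k}|$ from Theorem~\ref{thm-ungraded-structure} is needed to upgrade ``every standard monomial is a boosted code'' to ``the standard monomials are exactly the boosted codes.'' Alternatively, one could argue directly that each $\overline{\mathbf{code}}(\sigma)$ really does arise as a standard monomial by noting that Theorem~\ref{thm-combinatorial-bijection} characterizes this set and the Gröbner argument already forces the standard monomials into it with no room to spare; the cardinality route is cleaner and I would adopt it.
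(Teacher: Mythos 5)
Your proof is correct and takes essentially the same route as the paper, which simply cites Theorem~\ref{thm-combinatorial-bijection} together with the Gröbner-theoretic paragraph at the end of the proof of Theorem~\ref{thm-ungraded-structure}. You have just spelled out the cardinality bookkeeping (injection into a set of size $|\WWW_{n,k}|$, matched by $\dim S_{n,k}=|\WWW_{n,k}|$) that the paper leaves implicit.
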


\begin{proof}
    This follows from Theorem~\ref{thm-combinatorial-bijection} and the last paragraph of the above proof.
\end{proof}







Our next goal is to derive the graded $\symm_n$-module structure of the quotients $S_{n,k}$.
This result is stated most cleanly in terms of the following rings defined by 
Haglund, Rhoades, and Shimozono \cite{HRS}.

\begin{definition} \label{rnk-quotient-definition}
Let $1 \leq k \leq n$ be integers. Define the ideal $I_{n,k} \subseteq \QQ[\xx_n]$ by
\[
I_{n,k} := \langle x_1^k, x_2^k, \ldots, x_n^k, e_n, e_{n-1}, \ldots, e_{n-k+1} \rangle
\]
and let $R_{n,k} := \QQ[\xx_n]/I_{n,k}$ be the corresponding quotient ring.
\end{definition}

We can now state the graded $\symm_n$-module structure of $S_n$ in terms of the graded $\symm_n$-module structure of these $R_{n,k}$, which have been extensively studied in \cite{HRS}.

\begin{theorem} \label{thm-graded-module-refined}
As graded $\symm_n$-module we have
\[
S_{n,k} \cong R_{n,n}\langle 0 \rangle \oplus R_{n,n-1}\langle -1 \rangle \oplus \cdots \oplus R_{n,k}\langle -n+k \rangle.
\]
\end{theorem}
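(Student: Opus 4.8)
The plan is to prove the isomorphism by downward induction on $k$, running from the base case $k=n$ down to $k=1$, and exploiting the tower $J_{n,n}\subseteq J_{n,n-1}\subseteq\cdots\subseteq J_{n,1}$ together with the identity $J_{n,k+1}=J_{n,k}+\langle e_{n-k}\rangle$. For the base case $k=n$, I would observe that $J_{n,n}=J_n+\langle e_n,\ldots,e_1\rangle$ contains both $\langle e_1,\ldots,e_n\rangle$ and the powers $x_i^n$, hence contains $I_{n,n}$, so that $S_{n,n}$ is a graded quotient of $R_{n,n}$. By Theorem~\ref{thm-ungraded-structure} we have $\dim S_{n,n}=|\WWW_{n,n}|=n!=\dim R_{n,n}$, which forces $S_{n,n}=R_{n,n}$ as graded $\symm_n$-modules, i.e. the claim for $k=n$.

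For the inductive step I assume the statement for $k+1$. Since $J_{n,k+1}=J_{n,k}+\langle e_{n-k}\rangle$, the canonical surjection $S_{n,k}\twoheadrightarrow S_{n,k+1}$ has kernel the ideal $K:=e_{n-k}\cdot S_{n,k}$ of $S_{n,k}$ generated by the image of $e_{n-k}$. As $e_{n-k}$ is a homogeneous $\symm_n$-invariant of degree $n-k$, this gives a short exact sequence of graded $\symm_n$-modules
\[
0\longrightarrow K\longrightarrow S_{n,k}\longrightarrow S_{n,k+1}\longrightarrow 0,
\]
which splits in each degree since $\QQ$ has characteristic zero; hence $S_{n,k}\cong K\oplus S_{n,k+1}$ as graded $\symm_n$-modules. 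It therefore remains to identify $K$ with $R_{n,k}\langle -(n-k)\rangle$.

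Multiplication by $e_{n-k}$ is a degree-$(n-k)$, $\symm_n$-equivariant endomorphism of $S_{n,k}$ with image $K$, so
\[
K\cong\bigl(S_{n,k}/\mathrm{Ann}_{S_{n,k}}(e_{n-k})\bigr)\langle -(n-k)\rangle=\bigl(\QQ[\xx_n]/(J_{n,k}:e_{n-k})\bigr)\langle -(n-k)\rangle ,
\]
and everything comes down to the colon-ideal identity $(J_{n,k}:e_{n-k})=I_{n,k}$. For the inclusion $I_{n,k}\subseteq(J_{n,k}:e_{n-k})$ I would compute directly: $e_j\cdot e_{n-k}\in J_{n,k}$ for $n-k+1\le j\le n$ because $e_j\in J_{n,k}$ already, and writing $e_{n-k}(\xx_n)=e_{n-k}^{(i)}+x_i\,e_{n-k-1}^{(i)}$ gives $x_i^k\cdot e_{n-k}=x_i^k e_{n-k}^{(i)}+x_i^{k+1}e_{n-k-1}^{(i)}$, both of whose summands are among the defining generators of $J_n\subseteq J_{n,k}$. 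For the reverse inclusion I would argue by a dimension count: the inclusion just established yields $\dim\QQ[\xx_n]/(J_{n,k}:e_{n-k})\le\dim R_{n,k}=|\OP_{n,k}|$ (the dimension of $R_{n,k}$ being known from \cite{HRS}), while conversely $\dim\QQ[\xx_n]/(J_{n,k}:e_{n-k})=\dim K=\dim S_{n,k}-\dim S_{n,k+1}=|\WWW_{n,k}|-|\WWW_{n,k+1}|=|\OP_{n,k}|$, using Theorem~\ref{thm-ungraded-structure} together with the fact that the packed words using exactly the letters $1,\dots,\ell$ are in bijection with $\OP_{n,\ell}$. Equality of dimensions forces $(J_{n,k}:e_{n-k})=I_{n,k}$, hence $K\cong R_{n,k}\langle -(n-k)\rangle$; combined with the inductive hypothesis this gives $S_{n,k}\cong\bigoplus_{\ell=k}^n R_{n,\ell}\langle -(n-\ell)\rangle$, completing the induction.

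I expect the main obstacle to be the colon-ideal identity $(J_{n,k}:e_{n-k})=I_{n,k}$: the inclusion $I_{n,k}\subseteq(J_{n,k}:e_{n-k})$ must be checked generator by generator (the computation with $x_i^k\cdot e_{n-k}$ above being the only nontrivial case), and the reverse inclusion is not obtained by algebraic manipulation but is extracted from the dimension numerology, which leans on both the ungraded structure theorem and the known Hilbert series of $R_{n,k}$. A secondary bookkeeping issue is keeping the grading-shift $\langle -(n-k)\rangle$ straight, since it appears precisely because $e_{n-k}$ has positive degree $n-k$. One could alternatively attempt the equivalent statement $\grFrob(S_{n,k};q)=\sum_{\ell=k}^n q^{n-\ell}\,\grFrob(R_{n,\ell};q)$ directly, but reading the $\symm_n$-character off the standard monomial basis of $S_{n,k}$ is awkward because the $\symm_n$-action there is not the permutation action on monomials, so the short exact sequence route above seems preferable.
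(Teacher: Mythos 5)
Your proof is correct and takes essentially the same approach as the paper: the same short exact sequence arising from multiplication by $e_{n-k}$, the same key computation $x_i^k e_{n-k}=x_i^k e_{n-k}^{(i)}+x_i^{k+1}e_{n-k-1}^{(i)}\in J_{n,k}$, and the same closing dimension count $|\WWW_{n,k}|=|\WWW_{n,k+1}|+|\OP_{n,k}|$. Your colon-ideal phrasing $(J_{n,k}:e_{n-k})=I_{n,k}$ is an equivalent repackaging of the paper's direct construction of the map $\varphi\colon R_{n,k}\to S_{n,k}$.
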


We are now ready to prove Theorem \ref{thm-graded-module-refined}.

\begin{proof}
We proceed by descending induction on $k$.
In the case $n = k$, we claim that $J_{n,n} = I_{n,n} = \langle e_1, \dots, e_n \rangle$ is the classical invariant ideal
so that $S_{n,n} = R_{n,n}$. Indeed, each elementary symmetric polynomial $e_d$ appears as a generator of $J_{n,n}$.
On the other hand, Theorem~\ref{thm-ungraded-structure} implies that $\dim S_{n,n} = n! = \dim R_{n,n}$. This finishes the proof in the case $k = n$.

Now suppose $1 \leq k \leq n-1$.
We exhibit a short exact sequence of $\symm_n$-modules
\begin{equation}
    \label{exact-sequence}
    0 \rightarrow R_{n,k} \overset{\varphi}{\rightarrow} S_{n,k} \overset{\pi}{\rightarrow} S_{n,k+1} \rightarrow 0,
\end{equation}
where $\varphi$ is homogeneous of degree $n-k$ and $\pi$ is homogeneous of degree $0$. The exactness of this sequence implies
\[
S_{n,k} \cong S_{n,k+1} \oplus R_{n,k}\langle -n+k \rangle,
\]
proving the theorem by induction.

Since every generator of $J_{n,k+1}$ is also a generator of $J_{n,k}$, we may take $\pi: S_{n,k} \twoheadrightarrow S_{n,k+1}$ to be the canonical projection.
We have a map
\begin{equation}
    \widetilde{\varphi}: \QQ[\xx_n] \rightarrow S_{n,k}
\end{equation}
given by multiplication by $e_{n-k}$ followed by projection onto $S_{n,k}$. 
We verify that $\widetilde{\varphi}$ descends to a map $\varphi: R_{n,k} \rightarrow S_{n,k}$ by showing that $\widetilde{\varphi}$ sends every generator of $I_{n,k}$ to zero.
Indeed, we have
$\widetilde{\varphi}(e_j(x_1, \dots, x_n)) = 0$ for any $j > n-k$ since $e_j(x_1, \dots, x_n)$ is a generator of $J_{n,k}$. 
Furthermore, for $1 \leq i \leq n$ we have
\[
\widetilde{\varphi}(x_i^k) = 
x_i^k e_{n-k} = x_i^k e_{n-k}^{(i)} + 
x_i^{k+1} e_{n-k-1}^{(i)} = 0,
\]
where the final equality follows because both 
$x_i^{k} e_{n-k}^{(i)}$ and 
$x_i^{k+1} e_{n-k-1}^{(i)}$ are generators of $J_{n,k}$.
We conclude that $\widetilde{\varphi}$ descends to a map $\varphi: R_{n,k} \rightarrow S_{n,k}$ of $\symm_n$-modules which is homogeneous of degree $n-k$.
It is clear that $\varphi$ surjects onto the kernel of $\pi$.
The exactness of the sequence \eqref{exact-sequence} follows from the dimensional equality
\[
\dim(S_{n,k}) = |\WWW_{n,k}| = |\WWW_{n,k+1}| + |\OP_{n,k}| = \dim(S_{n,k+1}) + \dim(R_{n,k}). \qedhere
\]
\end{proof}

The graded Frobenius image of $S_{n,k}$ is most naturally stated in terms of the $C$-functions
defined in Equation~\eqref{c-function-definition}.

\begin{corollary}
For any $1 \leq k \leq n$, the graded Frobenius image of $S_{n,k}$ is given by 
\begin{equation}
    \grFrob(S_{n,k}; q) = \sum_{j = k}^n q^{n-j} \cdot (\omega \circ \mathrm{rev}_q) C_{n,j}(\xx;q).
\end{equation}
\end{corollary}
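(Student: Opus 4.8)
The plan is to read the graded Frobenius image straight off the module decomposition of Theorem~\ref{thm-graded-module-refined}. Applying the functor $\grFrob(-;q)$ to the graded $\symm_n$-isomorphism $S_{n,k} \cong \bigoplus_{j=k}^{n} R_{n,j}\langle -n+j\rangle$, and using that a degree shift $M \mapsto M\langle -s\rangle$ multiplies the graded Frobenius image by $q^{s}$, I obtain
\[
\grFrob(S_{n,k};q) \;=\; \sum_{j=k}^{n} q^{\,n-j}\,\grFrob(R_{n,j};q).
\]
Thus the corollary reduces to inserting the known graded Frobenius image of the Haglund--Rhoades--Shimozono rings $R_{n,j}$.

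The second step is to quote \cite{HRS}: the graded Frobenius image of $R_{n,j}$ is $\grFrob(R_{n,j};q) = (\mathrm{rev}_q \circ \omega)\,C_{n,j}(\xx;q)$, where $C_{n,j}$ is the symmetric function of Equation~\eqref{c-function-definition}. Substituting this into the display above gives
\[
\grFrob(S_{n,k};q) \;=\; \sum_{j=k}^{n} q^{\,n-j}\,(\mathrm{rev}_q \circ \omega)\,C_{n,j}(\xx;q),
\]
and it only remains to observe that $\mathrm{rev}_q$ and $\omega$ commute when applied to each summand: $\omega$ acts coefficient-by-coefficient on the expansion of a symmetric-function-valued polynomial in $q$ and leaves unchanged which coefficient is attached to which power of $q$, while $\mathrm{rev}_q$ merely permutes the powers of $q$ relative to the top $q$-degree, which $\omega$ preserves. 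Hence $\mathrm{rev}_q \circ \omega = \omega \circ \mathrm{rev}_q$ on each $C_{n,j}(\xx;q)$, and the previous display becomes the asserted formula.

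There is no real obstacle here beyond bookkeeping, but two points deserve care. First, one must confirm that the degree shift appearing in Theorem~\ref{thm-graded-module-refined} is oriented so that the $j$-th summand contributes $q^{\,n-j}$ (not $q^{-(n-j)}$) to the Hilbert/Frobenius series; this is exactly how the shift was set up in the short exact sequence \eqref{exact-sequence}. Second, one must match conventions with \cite{HRS} for the ``minor twist'' relating $C_{n,j}$ to $\grFrob(R_{n,j})$ — in particular that $\mathrm{rev}_q$ is taken with respect to the top degree of $R_{n,j}$ and that $\omega$ transposes the indexing shapes — so that the cited formula is quoted in precisely the form used above. As a consistency check, specializing the result to $k=1$ and using $S_n = S_{n,1}$ recovers Equation~\eqref{sn-graded-frobenius-image}.
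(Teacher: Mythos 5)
Your proposal is correct and matches the paper's own proof, which simply cites Theorem~\ref{thm-graded-module-refined} together with the graded Frobenius formula for $R_{n,j}$ from Haglund--Rhoades--Shimozono (their Theorem~6.11). You have merely spelled out the bookkeeping -- the $q^{n-j}$ coming from the degree shift and the commutation of $\omega$ with $\mathrm{rev}_q$ -- that the paper leaves implicit.
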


\begin{proof}
    Apply \cite[Thm. 6.11]{HRS} and Theorem~\ref{thm-graded-module-refined}.
\end{proof}

\section{Conclusion} \label{sec-conclusion}

In this paper we have described an quotient $S_n$ of $\QQ[\xx_n]$ whose algebraic properties are governed by the combinatorics of packed words in $\WWW_n$.
The ring $S_n$ has provable algebraic properties which are similar to conjectural properties of the superspace coinvariant ring  
$\Omega_n / \langle (\Omega_n)^{\symm_n}_+ \rangle$. 
With an eye towards proving these conjectures, it would be desirable to have a more direct connection between the packed word quotient $S_n$ and the 
superspace coinvariant ring.

Generalized coinvariant rings related to delta operators have seen ties to cohomology theory.
In the context of the rings $R_{n,k}$ of Definition~\ref{rnk-quotient-definition}, Pawlowski and Rhoades showed that 
$H^{\bullet}(X_{n,k};\QQ) = R_{n,k}$, where $X_{n,k}$ is the variety of $n$-tuples $(\ell_1, \dots, \ell_n)$ of 1-dimensional subspaces of $\CC^k$ which satisfy
$\ell_1 + \cdots + \ell_n = \CC^k$.
Rhoades and Wilson \cite{RW} refined this result by considering the open subvariety $X^{(r)}_{n,k}$ obtained by requiring the the 
sum $\ell_1 + \cdots + \ell_r$ of the first $r$ lines is 
direct.
In light of \cite{PR, RW}, it is natural to ask for a geometric perspective on the ring $S_n$ appearing in this paper.

\begin{problem}
\label{geometry-problem}
Find a variety $Y_n$ whose rational cohomology ring $H^{\bullet}(Y_n; \QQ)$ is isomorphic to $S_n$.
\end{problem}

The results in \cite{PR} suggest that $Y_n$ could be taken to be an open subvariety of the $n$-fold Cartesian product $(\mathbb{P}^{k-1})^n$ of
$(k-1)$-dimensional projective space with itself with the property that the cohomology map
$i^*: H^{\bullet}((\mathbb{P}^{k-1})^n;\QQ) \rightarrow H^{\bullet}(Y_n;\QQ)$ induced by the inclusion
$i: Y_n \hookrightarrow (\mathbb{P}^{k-1})^n$ is surjective.

\section{Acknowledgements}

B. Rhoades was partially supported by NSF Grant DMS-1953781.
The authors are grateful for Christopher O'Neill for helpful discussions about this project.

\end{document}